\newtheorem{theorem}{Theorem}
\newtheorem{defn}[theorem]{Definition}
\newtheorem{lemma}[theorem]{Lemma}
\newtheorem*{remark*}{Remark}
\newcommand{\C}{\Bbb C}
\newenvironment{proof*}{\vskip 2mm\noindent {}}{\hfill $\Box$ \vskip 2mm}
\title{On the non-existence of limit E-Brody curves}
\begin{document}

\author{Tran Duc-Anh}
\address{Department of Mathematics \\
Hanoi National University of Education\\
136 Xuan Thuy St., Hanoi, Vietnam} \email{ducanh@hnue.edu.vn}

\subjclass[2010]{32Q28, 30D45, 30E05}
\keywords{Brody curves, normal families of holomorphic mappings}

\begin{abstract}
We give a simple proof of the non-existence of limit E-Brody curves,
in the sense of Do Duc Thai, Mai Anh Duc and Ninh Van Thu, for a
class of manifolds including $\mathbb{C}^n$ and
$(\mathbb{C}^{\ast})^2$ which were studied by these authors in
\cite{Do DT et al}, by constructing a suitable holomorphic
interpolation function.
\end{abstract}

\maketitle
\section{Introduction}
Non-normal families of holomorphic mappings and Brody curves (i.e. a holomorphic curve with bounded derivatives) are closely related (cf. \cite{Brody, Zal}). Concerning these topics, in \cite{Do DT et al}, the authors proved the following results.

\begin{theorem}[Theorem 1.6 of \cite{Do DT et al}]\label{theorem 1.6}
$\mathbb C^n~(n\geq 2)$ is not of $E$-limit type for any  length function $E$ on $\mathbb C^n$.
\end{theorem}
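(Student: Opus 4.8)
The plan is to argue by contradiction: I would assume that $\mathbb{C}^n$ ($n \ge 2$) is of $E$-limit type and unwind the definition to extract a concrete limit $E$-Brody curve $f : \mathbb{C} \to \mathbb{C}^n$, that is, a nonconstant entire map arising as a locally uniform limit of holomorphic maps of $E$-length at most $1$, normalized (as the reparametrization procedure forces) so that $E(f(0), f'(0)) = 1 = \sup_{z} E(f(z), f'(z))$. Writing $f = (f_1, \dots, f_n)$, the hypothesis $n \ge 2$ is exactly what I would exploit: at least one coordinate, say $f_1$, is nonconstant, while the remaining coordinates furnish free directions in which the curve may be perturbed without being forced to respect any rigid relation among the components. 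The goal is then to manufacture a competing curve whose existence is incompatible with the extremal normalization of $f$ (or, read through the compactness formulation, with the family of normalized $E$-Brody curves admitting the limit $f$ at all).

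The heart of the argument is this competitor, built by holomorphic interpolation, which is the construction the abstract advertises. I would fix a sparse discrete sequence $z_k \to \infty$ in $\mathbb{C}$ and invoke a Weierstrass/Mittag-Leffler type interpolation to produce an entire function $\varphi$ prescribing chosen values and first derivatives at the nodes $z_k$; I then set $g = f + h$, where $h$ is assembled from $\varphi$ and lives in the free coordinates made available by $n \ge 2$. The spacing of $\{z_k\}$ is the tunable parameter: by placing the nodes far enough apart and keeping the prescribed increments small, I can make $h$ and $h'$ as small as desired away from the nodes, while still steering $g$ so that either $g(z_k)$ escapes every compact set of $\mathbb{C}^n$ or $E(g(z_0), g'(z_0)) > 1$ at some single node. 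The first alternative shows the family of such curves cannot admit an admissible limit, and the second directly violates the normalization $\sup_{z} E(f(z), f'(z)) = 1$; in either reading this contradicts the assumed $E$-limit type.

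The main obstacle I anticipate is controlling $E(g(z), g'(z))$ over the whole plane, since $E$ is assumed merely upper semicontinuous and otherwise arbitrary, so there is no metric inequality or closed formula to lean on. The remedy is quantitative rather than structural: upper semicontinuity yields, on each compact piece of the image, a finite local bound for $E$, and the $\mathbb{C}$-homogeneity $E(p, \lambda v) = |\lambda|\, E(p, v)$ converts smallness of $g'$ into smallness of $E(g, g')$. The nodes $z_k$ must therefore be chosen adaptively, their spacing governed by how fast the local bounds for $E$ can grow along the prescribed path, so that $g$ can be routed, using the extra coordinate available precisely because $n \ge 2$, through regions where $E$ stays under control. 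Making this adaptive routing rigorous is where essentially all the work lies, and it is exactly the step with no analogue when $n = 1$, since there is then no free coordinate in which to steer around the set where $E$ is large.
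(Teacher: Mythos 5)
There is a genuine gap, and it is structural: you have inverted the quantifiers in the definition of $E$-limit type. The definition asserts that \emph{for every} non-normal family $\mathcal{F}\subset\mathrm{Hol}(\Delta,\C^n)$ containing no compactly divergent sequence \emph{there exist} rescalings $f_j(p_j+\rho_j\xi)$ converging to a non-constant $E$-Brody curve. To refute it you must exhibit one concrete such family and show that \emph{no} admissible rescaled limit along it is a non-constant $E$-Brody curve. Your proposal never produces a family at all; instead it extracts a single hypothetical limit curve $f$ and endows it with the normalization $E_{f(0)}(f'(0))=1=\sup_z E_{f(z)}(f'(z))$, which appears nowhere in the definition --- the limit curve is only guaranteed to be non-constant with derivative bounded by \emph{some} constant, with no extremality whatsoever (the normalization you invoke belongs to the classical Brody reparametrization lemma, which is not part of the hypothesis here). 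Consequently neither of your two alternatives yields a contradiction: producing a competitor $g=f+h$ with $E_{g(z_0)}(g'(z_0))>1$ contradicts nothing, since the definition makes no claim about all curves and $f$ is not extremal; and a curve whose values $g(z_k)$ escape every compact set is irrelevant to compact divergence, which is a property of the family $\mathcal{F}$ on the domain $\Delta$, not of a single entire curve.

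The interpolation idea itself is the right tool, but it is aimed at the wrong target. In the paper, Theorem 1.6 follows from the main theorem applied to $\C\times\C^{n-1}$: the interpolation lemma (prescribing $g(\alpha_j)=p_j$ and $g'(\alpha_j)=k_j$, with $k_j$ chosen adaptively against the values of $E$ --- exactly the quantitative instinct your last paragraph describes) is used to build \emph{one} explicit entire curve $F$ that is \emph{not} $E$-Brody; the family is then $f_n(z)=F(nz)$, which contains no compactly divergent subsequence because $f_n(0)=F(0)$ for all $n$. The decisive step, entirely absent from your sketch, is a rigidity argument: since the first coordinate of $F$ is explicit ($e^z$ in the $\C^{\ast}\times X$ case, and correspondingly simple in the $\C\times X$ case relevant to $\C^n$), any convergent rescaling forces $j\rho_j\to B\neq 0$ and $ja_j\to A$ (Hurwitz's theorem enters here), so every possible limit has the form $G(\xi)=F(A+B\xi)$, an affine reparametrization of $F$, hence again not $E$-Brody --- contradiction. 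Without such a mechanism forcing \emph{all} rescaled limits of a specific family to violate the Brody condition, constructing curves with large or unbounded $E$-derivative proves nothing. Your adaptive control of $E$ via local boundedness and homogeneity should be deployed in building $F$ (the paper's choice of $k_j$ does precisely this), not in perturbing a limit curve that you were never entitled to normalize.
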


\begin{theorem}[Theorem 1.7 of \cite{Do DT et al}] \label{theorem}$(\mathbb C^*)^2$ is not of $ds^2_{FS}$-limit type, where $ds^2_{FS}$ is the Fubini-Study metric on $\mathbb P^2(\mathbb C)$.
\end{theorem}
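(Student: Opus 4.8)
The plan is to argue by contradiction: suppose $(\mathbb{C}^*)^2$ were of $ds^2_{FS}$-limit type, so that there exists a non-constant limit $ds^2_{FS}$-Brody curve $f:\mathbb{C}\to(\mathbb{C}^*)^2$ in the sense of \cite{Do DT et al}. Since the two affine coordinates of $f$ are nowhere-vanishing entire functions, I would first write $f=[1:e^{g_1}:e^{g_2}]$ in the homogeneous coordinates of $\mathbb{P}^2(\mathbb{C})$, with $g_1,g_2$ entire, and record the Fubini--Study length of the derivative in the explicit form
\[
\|f'\|_{FS}^2=\frac{|g_1'|^2e^{2u_1}+|g_2'|^2e^{2u_2}+e^{2(u_1+u_2)}|g_1'-g_2'|^2}{(1+e^{2u_1}+e^{2u_2})^2},\qquad u_i=\operatorname{Re} g_i.
\]
The Brody bound $\|f'\|_{FS}\le 1$ then becomes a single inequality relating $g_1',g_2'$ to $u_1,u_2$, and the entire problem is reduced to the function theory of the pair $(g_1,g_2)$, exactly paralleling the mechanism used for $\mathbb{C}^n$ in Theorem \ref{theorem 1.6}.

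Next I would extract the degeneration of $f$ near the boundary divisor of $(\mathbb{C}^*)^2$ inside $\mathbb{P}^2(\mathbb{C})$, namely $\{w_1w_2=0\}$ together with the line at infinity. The soft rigidity driving this is Liouville's theorem: if, say, $g_1$ is non-constant then $\operatorname{Re} g_1$ cannot be bounded below, since otherwise $e^{-g_1}$ would be a bounded entire function and hence constant. I would use this to select a sequence $z_k\to\infty$ along which the translates $f(\cdot+z_k)$ are pushed toward the boundary and the numerator of the displayed quotient is overwhelmed by its denominator, so that $\|f'(z_k+\cdot)\|_{FS}\to 0$ locally uniformly. Consequently every limit of the translates $f(\cdot+z_k)$ is either constant or lands in the boundary, which contradicts the defining property of a limit Brody curve, whose translate-limits must again be non-constant curves into $(\mathbb{C}^*)^2$.

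The role of the holomorphic interpolation function is to make this selection effective and, crucially, to control the two coordinates simultaneously: rather than reasoning abstractly about $g_1,g_2$, I would prescribe the values of (say) $g_1$ and of $g_1-g_2$ on a discrete sequence escaping to infinity and solve the resulting interpolation problem by a Weierstrass/Mittag--Leffler construction with explicit growth estimates, producing an entire function that forces the coordinates of $f$ onto the boundary along $z_k$ while keeping $|g_1'|$ and $|g_2'|$ within the Brody bound. The hard part will be the cross term $e^{2(u_1+u_2)}|g_1'-g_2'|^2$: because the Fubini--Study metric couples the two coordinates, one cannot drive $u_1,u_2$ to $-\infty$ independently, and the interpolation must be arranged so that $g_1-g_2$ and the individual real parts degenerate compatibly, keeping the numerator small without ever allowing the speed to recover. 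Once this simultaneous control is secured, the decay $\|f'(z_k+\cdot)\|_{FS}\to 0$ follows and the contradiction is complete.
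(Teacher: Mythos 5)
There is a genuine gap, and it is logical rather than computational: you have negated the wrong quantifier. The property ``$(\mathbb{C}^*)^2$ is of $ds^2_{FS}$-limit type'' is a statement about \emph{every} non-normal family without compactly divergent subsequences, so its negation only requires exhibiting \emph{one} such family $\mathcal{F}\subset \mathrm{Hol}(\Delta,(\mathbb{C}^*)^2)$ for which no rescaled subsequence $f_j(p_j+\rho_j\xi)$ converges to a non-constant $ds^2_{FS}$-Brody curve. You instead assume ``there exists a non-constant limit $ds^2_{FS}$-Brody curve $f$'' and seek a contradiction; this would prove the far stronger claim that no non-constant $FS$-Brody curve into $(\mathbb{C}^*)^2$ can arise as a rescaling limit at all --- and that claim is false. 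In your own notation, take $g_1(z)=z$, $g_2=0$: then $\|f'\|_{FS}^2 = 2e^{2u_1}/(2+e^{2u_1})^2$ is bounded, so $z\mapsto (e^z,1)$ is a non-constant $FS$-Brody curve in $(\mathbb{C}^*)^2$, and it is the rescaling limit of the non-normal, non-compactly-divergent family $f_j(z)=(e^{jz},1)$ with $p_j=0$, $\rho_j=1/j$. Hence your assumed object exists and no contradiction can be reached from its existence alone. Moreover, the contradiction mechanism you invoke --- that ``translate-limits of a limit Brody curve must again be non-constant curves into $(\mathbb{C}^*)^2$'' --- is not part of the definition: nothing there constrains translates of the limit curve, so even granting your degeneration estimate $\|f'(z_k+\cdot)\|_{FS}\to 0$, no contradiction follows. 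Relatedly, your use of interpolation is incoherent within a proof by contradiction: you cannot ``prescribe'' the values of the functions $g_1$, $g_1-g_2$ attached to a curve $f$ whose existence you have merely assumed; interpolation builds new functions, it does not constrain given ones.

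The paper's argument runs in exactly the opposite direction, and constructively. Writing $(\mathbb{C}^*)^2=\mathbb{C}^*\times X$ with $X=\mathbb{C}^*$ (Theorem \ref{main}), it uses the interpolation Lemma \ref{mainlemma} to produce an entire $g$ with prescribed values $g(\alpha_j)=p_j$ and \emph{enormous} derivatives $g'(\alpha_j)=k_j$ at points with $\sum_j 1/|\alpha_j|<\infty$, so that the curve $F(z)=(e^z,(f\circ g)(e^z))$ has unbounded $E$-derivative --- deliberately \emph{not} Brody. The bad family is then $f_n(z)=F(nz)$: it has no compactly divergent subsequence since $f_n(0)=F(0)$, and the rigidity of the first coordinate $e^{nz}$ (Hurwitz's theorem plus the logarithmic-derivative computation showing $j\rho_j\to B\neq 0$ and $e^{ja_j}\to e^A$) forces any rescaling limit to be $F(A+B\xi)$, which is non-constant but has unbounded derivative, contradicting the limit-type definition. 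So the contradiction in the paper is ``the forced limit is non-Brody,'' not ``the limit degenerates to a constant or escapes to the boundary''; note also that this argument never needs the explicit Fubini--Study formula and works for an arbitrary hermitian metric $E$, whereas your route is tied to the $FS$ expression and, as explained above, cannot close.
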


Their proof makes use of a result of J. Winkelmann and some quite
complicated techniques of extraction of subsequences. So in this
note, we try to give another short and slightly more general proof.

First of all, we recall some definitions to explain the content of their theorems.

\begin{defn}
Let $X$ be a complex manifold with a hermitian metric $E.$ A
holomorphic curve $f: \mathbb C\to X$ is said to be an
$\mathbf{E}$\textbf{-Brody curve} if its derivative is bounded,
i.e.,  $|f'(z)|_E\leq c$ for every  $z\in\mathbb C$ where $c$ is a
constant positive constant.
\end{defn}

A length function is a more general notion of a metric, but it will not concern us in this note, so we refer the reader to their paper \cite{Do DT et al} for the definition. Note that if we write $E_p(\vec{v}),$ it means the length of the tangent vector $\vec{v}$ at the point $p$ with respect to the metric $E.$ If the point $p$ is well understood, $|\vec{v}|_E$ is sufficient.

\begin{defn} Let $X$ be a complex manifold with a hermitian metric $E$. The complex manifold $X$ is said to be of \textbf{$\mathbf{E}$-limit type}
 if $X$ satisfies the following:

For each non-normal family $\mathcal{F}\subset \mathrm{Hol}(\Delta,
X),$ where $\Delta$ is a domain in $\C$ and $\mathrm{Hol}(\Delta,
X)$ the set of all holomorphic mappings from $\Delta$ into $X,$ such
that $\mathcal{F}$ contains no compactly divergent sequence, then
there exist sequences $\{p_j\}\subset \Delta$ with $p_j\to
p_0\in\Delta$ as $j\to\infty$, $\{f_j\} \subset \mathcal{F},
\{\rho_j\}\subset \mathbb R$ with $\rho_j>0$ and $\rho_j \to 0^+$ as
$j\to \infty$ such that
$$
g_j(\xi):=f_j(p_j+\rho_j\xi), \xi\in \mathbb  C,
$$
converges uniformly on any compact subsets of $\mathbb C$ to a non-constant $E$-Brody curve $g: \mathbb C\to X$.
\end{defn}

For the convenience of the reader, we recall the definition of a normal family and compact divergence, which the reader can find in \cite{Do} or\cite{Do DT et al}.

\begin{defn}A family $\mathcal{F}\subset \mathrm{Hol}(\Delta, X)$ is said to be \textbf{normal} if, for each sequence $\{f_j\}_{j=1}^{\infty}$ in $\mathcal{F},$ there exists a subsequence which converges uniformly on compact subsets of $\Delta.$ A family of mappings which is not normal is called a \textbf{non-normal family}.\end{defn}

\begin{defn} A sequence $\{f_j\}_{j=1}^{\infty}$ in $\mathrm{Hol}(\Delta, X)$
is said to be \textbf{compactly divergent} if, for all compacts
$K\subset \Delta$ and $L\subset X,$ there exists $j_0$ such that,
for $j\geq j_0,$ we have $f_j(K)\cap L = \emptyset.$
\end{defn}

\section{Non-existence of limit Brody curves}
Now we proceed to the main content of this note. We will prove a slight generalization of  their Theorem \ref{theorem} (or Theorem 1.7 in their original paper \cite{Do DT et al}) stated as follows.

\begin{theorem}[Main result]\label{main} Let $X$ be a complex manifold
 which contains an entire curve, i.e. there exists a non-constant holomorphic
 curve $f\colon \C\to X.$ Then both $\C\times X$ and $\C^{\ast}\times X$
 are not of $E-$limit type for any hermitian metric $E$ on $\C\times X$ or
 $\C^{\ast}\times X$ respectively.
 \end{theorem}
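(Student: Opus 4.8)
The plan is to show that $\C \times X$ and $\C^* \times X$ fail the $E$-limit condition by producing, for each candidate limit Brody curve $g$, an explicit non-normal family $\mathcal F$ with no compactly divergent sequence for which \emph{no} reparametrized sequence $g_j(\xi) = f_j(p_j + \rho_j \xi)$ can converge to a non-constant $E$-Brody curve. The key structural idea is that in a product manifold the first factor ($\C$ or $\C^*$) carries a holomorphic function that can be made to oscillate wildly along the family while the second factor $X$ is fed the entire curve $f \colon \C \to X$. Because $X$ contains an entire curve, a non-constant Brody limit landing in $\{c\} \times X$ is conceivable, so the construction must force the first-coordinate behaviour to obstruct convergence or to make any surviving limit constant.

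\textbf{Construction of the family.} I would take the domain to be $\Delta = \D$ (or $\C$) and define a sequence $f_j \colon \Delta \to \C \times X$ (respectively $\C^* \times X$) of the form
\begin{equation*}
  f_j(z) = \bigl( \varphi_j(z),\, f(\lambda_j z) \bigr),
\end{equation*}
where $f \colon \C \to X$ is the given entire curve, $\lambda_j \to \infty$ drives non-normality in the $X$-factor, and $\varphi_j \colon \Delta \to \C$ (resp. $\to \C^*$) is a carefully chosen holomorphic interpolation function. The role of $\varphi_j$ — and this is where the \emph{interpolation function} promised in the abstract enters — is to arrange prescribed values at a dense grid of points so that the family is non-normal, is not compactly divergent (the $\varphi_j$ stay in a fixed compact region of $\C$ or $\C^*$ along some subsequence of base points), yet has first-coordinate derivatives that blow up precisely on the scales $\rho_j$ at which the $X$-factor would otherwise produce a non-constant Brody limit. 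For the $\C^*$ case the interpolation must respect the punctured target, e.g.\ by taking $\varphi_j = \exp \circ \psi_j$ for an interpolating $\psi_j$, which is the technical refinement that separates this note's argument from the metric-specific computation in \cite{Do DT et al}.

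\textbf{Ruling out a non-constant limit.} Given any sequences $p_j \to p_0$, $\rho_j \to 0^+$, and any subsequence along which $g_j(\xi) = f_j(p_j + \rho_j \xi)$ converges locally uniformly to some $g$, I would analyze the two coordinates separately. The second coordinate is $f(\lambda_j(p_j + \rho_j \xi))$; for this to converge to something non-constant one is forced into a regime $\lambda_j \rho_j \asymp 1$, pinning down the scale $\rho_j \asymp 1/\lambda_j$. I would then show that on exactly this scale the first coordinate $\varphi_j(p_j + \rho_j \xi)$, by the design of the interpolation, either fails to converge or has unbounded derivative, contradicting that $g$ is an $E$-Brody curve (bounded derivative) defined on all of $\C$. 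Conversely, on any scale where the first coordinate behaves, the second coordinate degenerates to a constant, so the limit $g$ is constant — again violating the definition of $E$-limit type. Because the metric $E$ is an arbitrary hermitian metric, comparability of $E$ with a standard product metric on compact sets lets me pass the derivative bounds through without metric-specific estimates.

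\textbf{Main obstacle.} The hard part will be the explicit construction of $\varphi_j$ (and the verification that the resulting family is non-normal while admitting no compactly divergent sequence), so that its oscillation is synchronized with the single distinguished scale $\rho_j \asymp 1/\lambda_j$ forced by the $X$-factor, and only that scale. I must ensure the obstruction is robust against the freedom in choosing $p_j \to p_0$ and $\rho_j \to 0^+$ and against passing to subsequences — the interpolation data therefore needs to be specified on a grid fine enough to trap every possible choice of base point and rescaling. Handling the punctured factor $\C^*$, where one cannot simply prescribe arbitrary values (the target omits $0$), is the second delicate point, resolved by lifting through $\exp$ and interpolating in the universal cover.
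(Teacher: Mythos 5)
There is a genuine gap, and it sits exactly where you flagged your ``main obstacle'': the proof is not actually carried out, and the mechanism you plan to use is flawed at its core. Your ruling-out step rests on the claim that a non-constant limit of the second coordinate forces the single scale $\rho_j \asymp 1/\lambda_j$. This is false for a general entire curve $f$: the rescaled maps are $f(\lambda_j p_j + \lambda_j\rho_j\xi)$ with \emph{moving} base points $\lambda_j p_j$, and since nothing in the hypotheses bounds $|f'|$ above or below, non-constant limits can arise at many scales. For instance, with $f(w)=e^{w^2}$ and $p_j$ chosen near a fixed $p_0\neq 0$ so that $(\lambda_j p_j)^2\in 2\pi i\Z$, taking $\rho_j \sim c/(2\lambda_j^2 p_0)$ gives $\lambda_j\rho_j\to 0$ yet a non-constant limit $e^{c\xi}$. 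So there is no ``single distinguished scale'' for $\varphi_j$ to obstruct; you would have to obstruct \emph{every} scale at which \emph{every} subsequence and every admissible $(p_j,\rho_j)$ could produce a non-constant limit, while keeping the family free of compactly divergent sequences --- and no construction of such $\varphi_j$ is given. A second gap is the metric handling: comparability of $E$ with a product metric holds only on compact subsets of the manifold, but the $E$-Brody condition is global and the image of a candidate limit curve $g\colon\C\to X$ need not be relatively compact, so ``passing the derivative bounds through'' fails precisely in the regime where the contradiction must occur.

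The paper's proof avoids both problems by inverting your architecture: instead of a sequence of interpolation functions $\varphi_j$ in the first factor, it uses \emph{one} interpolation function $g$ in the second factor and keeps the first factor rigid. By the interpolation lemma (valid since $\sum_j 1/|\alpha_j|<\infty$), one chooses $g$ with $g(\alpha_j)=p_j$ and $g'(\alpha_j)=k_j$, where the $k_j$ are tuned against the given hermitian metric $E$ at the points $(\alpha_j,f(p_j))$ --- this is how an arbitrary $E$ is handled, with no compactness needed --- so that the single curve $F(z)=(e^z,(f\circ g)(e^z))$ is \emph{not} $E$-Brody. The family is then $f_n(z)=F(nz)$, non--compactly-divergent because $f_n(0)=F(0)$. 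The rigidity of the first coordinate $e^{nz}$ does all the quantifier work for free: any locally uniform limit of $f_j(a_j+\rho_j\xi)$ has first coordinate of the form $\exp(A+B\xi)$ (via Hurwitz and the logarithmic derivative $j\rho_j$), hence the limit equals $F(A+B\xi)$, which is non-constant only if $B\neq 0$; but then it is an affine reparametrization of the non-Brody curve $F$ and cannot be $E$-Brody. If you want to salvage your approach, the lesson is to collapse your per-$j$ interpolation into a single second-coordinate function and let a rigid first coordinate ($z$ for $\C\times X$, $e^z$ for $\C^{\ast}\times X$) force every possible limit to be a reparametrization of one precomputed non-Brody curve, rather than trying to synchronize oscillations with a scale that the $X$-factor does not in fact determine.
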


In the proof of the theorem, we make use of two lemmas. The first is stated without proof (cf. page 299, chapter 15 \cite{Rudin}).

\begin{lemma}\label{lemma}Let $c_n>0$ for $n\in\mathbb{N}.$ The following are equivalent.

(a) $\prod_{n=1}^{\infty}(1+c_n) <\infty,$
(b) $\sum_{n=1}^{\infty}c_n < \infty.$

Moreover, (a) and (b) are equivalent to the following

(c) $\prod_{n=1}^{\infty}(1-c_n) >0$ if we suppose in addition $0<c_n<1$ for every $n.$
\end{lemma}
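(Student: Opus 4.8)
The plan is to establish the full equivalence by exploiting the standard bridge between infinite products and series via the exponential/logarithm correspondence, resting on two elementary inequalities: $1+x \leq e^x$ for all real $x$, and $-\log(1-x) \geq x$ for $0 \leq x < 1$. The key structural observation is monotonicity. Since every factor $1+c_n$ exceeds $1$, the partial products $P_N := \prod_{n=1}^N (1+c_n)$ form an increasing sequence, so statement (a) is exactly the assertion that $\{P_N\}$ is bounded above; this is what lets me compare convergence of the product with convergence of the partial sums $S_N := \sum_{n=1}^N c_n$.

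First I would prove (a) $\Leftrightarrow$ (b). For (b) $\Rightarrow$ (a), applying $1+c_n \leq e^{c_n}$ factorwise gives $P_N \leq e^{S_N} \leq e^{\sum_{n=1}^\infty c_n} < \infty$, so the increasing sequence $\{P_N\}$ is bounded and the product converges. For the converse (a) $\Rightarrow$ (b), I would expand the product and discard all but the constant and linear terms: since every $c_n > 0$,
\[
P_N = \prod_{n=1}^N (1+c_n) \geq 1 + \sum_{n=1}^N c_n = 1 + S_N.
\]
Hence if $\{P_N\}$ is bounded then so is $\{S_N\}$, and being a sum of positive terms, $\{S_N\}$ converges. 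This settles the equivalence of (a) and (b).

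Next I would prove (b) $\Leftrightarrow$ (c) under the extra hypothesis $0 < c_n < 1$. Now the partial products $Q_N := \prod_{n=1}^N (1-c_n)$ are decreasing and lie in $(0,1)$, so they converge to some $L \in [0,1)$, and (c) asserts $L > 0$. Taking logarithms, $L > 0$ is equivalent to convergence of the nonnegative series $\sum_{n=1}^\infty \bigl(-\log(1-c_n)\bigr)$. For (c) $\Rightarrow$ (b), the inequality $-\log(1-c_n) \geq c_n$ gives $\sum c_n \leq \sum \bigl(-\log(1-c_n)\bigr) < \infty$ by comparison. For (b) $\Rightarrow$ (c), I would use that $\sum c_n < \infty$ forces $c_n \to 0$, so $c_n \leq 1/2$ for all $n$ beyond some $N_0$; on the interval $(0, 1/2]$ one checks $-\log(1-x) \leq 2x$, whence $\sum_{n > N_0}\bigl(-\log(1-c_n)\bigr) \leq 2\sum_{n>N_0} c_n < \infty$. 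The finitely many initial terms are finite because each factor $1-c_n$ is strictly positive, so the whole series converges and $L > 0$.

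The only delicate point is the tail comparison in (b) $\Rightarrow$ (c): the bound $-\log(1-x) \leq 2x$ fails as $x \to 1^-$, which is precisely why one must first pass to a tail on which $c_n$ is small before comparing. The hypothesis $0 < c_n < 1$ guarantees each factor $1-c_n$ is strictly positive, so that no finite initial segment can already force the product to zero, while convergence of $\sum c_n$ supplies the required smallness $c_n \to 0$. Everything else is routine monotonicity and term-by-term estimation.
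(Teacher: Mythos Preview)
Your argument is correct and is essentially the standard proof one finds in textbooks. Note, however, that the paper does not prove this lemma at all: it is stated explicitly as a known fact ``without proof'' and referred to Rudin, \emph{Real and Complex Analysis}, chapter~15. So there is no paper-proof to compare against; you have simply supplied the missing classical argument. The only comment worth making is that your route via the inequalities $1+x\le e^x$, $P_N\ge 1+S_N$, and the tail bound $-\log(1-x)\le 2x$ on $(0,1/2]$ is exactly the approach Rudin takes, so in that sense you are in full agreement with the cited source.
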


The second is the main lemma, which is the key of the proof of the Main result.

\begin{lemma}\label{mainlemma}Let $\{\alpha_j\}_{j=1}^{\infty}$ be a sequence of pairwise distinct nonzero complex numbers such that $\sum_{j=1}^{\infty}\frac{1}{|\alpha_j|}<\infty.$ Then, for all complex numbers $p_j$ and $k_j$ with $1\leq j\in \mathbb{N},$ there exists a holomorphic function $g\colon \C\to \C$ which satisfies the following interpolation conditions: $g(\alpha_j)=p_j$ and $g'(\alpha_j)=k_j$ for every $1\leq j\in \mathbb{N}.$ \end{lemma}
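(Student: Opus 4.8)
The plan is to build $g$ explicitly as a Weierstrass-type interpolation series, using Lemma \ref{lemma} only to launch the canonical product. First I would record that the hypothesis $\sum_j 1/|\alpha_j| < \infty$ forces $1/|\alpha_j| \to 0$, hence $|\alpha_j| \to \infty$, so $\{\alpha_j\}$ is a discrete subset of $\C$ with no finite accumulation point. This lets me form the product $\phi(z) = \prod_{j}(1 - z/\alpha_j)$; since $\sum_j |z/\alpha_j| = |z|\sum_j 1/|\alpha_j|$ converges uniformly on compact sets, Lemma \ref{lemma} guarantees that $\phi$ is a well-defined entire function whose zeros are exactly the $\alpha_j$, each simple, so that $\phi'(\alpha_j) \neq 0$.

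Next I would manufacture, for each $j$, an elementary building block carrying the prescribed value and derivative at $\alpha_j$ while vanishing to second order at every other node. Set $h_j(z) = \phi(z)^2 / \big([\phi'(\alpha_j)]^2 (z - \alpha_j)^2\big)$, which is entire because $\phi^2$ has a double zero at $\alpha_j$; a short expansion of $\phi$ at its simple zero gives $h_j(\alpha_j) = 1$ and $h_j'(\alpha_j) = \beta_j := \phi''(\alpha_j)/\phi'(\alpha_j)$, while $h_j$ and $h_j'$ both vanish at each $\alpha_i$ with $i \neq j$. I then define
$$E_j(z) = h_j(z)\,\big[\,p_j + \mu_j(z - \alpha_j)\,\big]\, e^{\gamma_j(z - \alpha_j)},$$
and choose the coefficient $\mu_j$ so that the three contributions to $E_j'(\alpha_j)$ — namely $\beta_j p_j$ from $h_j'$, $\mu_j$ from the linear factor, and $\gamma_j p_j$ from the exponential — add up to $k_j$, i.e. $\mu_j = k_j - \beta_j p_j - \gamma_j p_j$. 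By construction $E_j(\alpha_j) = p_j$, $E_j'(\alpha_j) = k_j$, and $E_j(\alpha_i) = E_j'(\alpha_i) = 0$ for $i \neq j$, so provided the series $g = \sum_j E_j$ converges locally uniformly it will satisfy all interpolation conditions at once, by term-by-term differentiation.

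The main obstacle is convergence: since the data $p_j, k_j$ are completely arbitrary, no estimate coming from $\sum 1/|\alpha_j|$ alone can control the series, and this is exactly what the still-free factors $e^{\gamma_j(z-\alpha_j)}$ are for. I would take $\gamma_j = \lambda_j \bar\alpha_j / |\alpha_j|$ with $\lambda_j > 0$ real, which keeps $e^{\gamma_j(z-\alpha_j)} = 1$ at $z = \alpha_j$ (so the interpolation is undisturbed) and yields $\mathrm{Re}\,\gamma_j(z - \alpha_j) \le -\lambda_j(|\alpha_j| - |z|)$; in particular $|e^{\gamma_j(z-\alpha_j)}| \le e^{-\lambda_j |\alpha_j|/2}$ on the disc $|z| \le |\alpha_j|/2$. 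On that disc the factors $h_j$ and $p_j + \mu_j(z - \alpha_j)$ are bounded by a quantity that grows only polynomially in $\lambda_j$ (note $|\mu_j|$ is linear in $\lambda_j$), so the exponential decay dominates and I can pick $\lambda_j$ large enough that $\sup_{|z| \le |\alpha_j|/2} |E_j(z)| \le 2^{-j}$.

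Finally I would assemble $g = \sum_j E_j$. On any fixed compact set $|z| \le R$, choose $N$ with $|\alpha_j|/2 > R$ for $j \ge N$ (possible since $|\alpha_j| \to \infty$); then $|E_j(z)| \le 2^{-j}$ for $j \ge N$, so the tail converges uniformly and $g$ is entire. Uniform convergence on compacta also legitimizes $g' = \sum_j E_j'$, and evaluating both series at $\alpha_j$ leaves only the $j$-th term, giving $g(\alpha_j) = p_j$ and $g'(\alpha_j) = k_j$. (Alternatively one could phrase the construction through the Mittag--Leffler theorem, prescribing the double-pole principal parts of $g/\phi^2$ at each $\alpha_j$, but the explicit series above keeps the argument self-contained and tied to Lemma \ref{lemma}.)
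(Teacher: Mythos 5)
Your proof is correct, but it takes a genuinely different route from the paper's. The paper argues cohomologically: it forms $h(z)=\prod_j(1-z/\alpha_j)^2$, notes that the sheaf $\mathcal{I}=h\cdot\mathcal{O}$ of functions vanishing to order at least two at every $\alpha_j$ is coherent, takes local solutions $f_j$ of the interpolation problem as a \v{C}ech $0$-cochain, and corrects them to a single global function using $H^1(\C,\mathcal{I})=0$ (Cartan's Theorem B, $\C$ being Stein) --- a Mittag-Leffler-type argument that produces no formula. You instead construct the interpolant explicitly: the genus-zero product $\phi=\prod_j(1-z/\alpha_j)$ with simple zeros, the blocks $h_j=\phi^2/\bigl([\phi'(\alpha_j)]^2(z-\alpha_j)^2\bigr)$ which carry the value $1$ and derivative $\beta_j=\phi''(\alpha_j)/\phi'(\alpha_j)$ at $\alpha_j$ while vanishing doubly at all other nodes, a linear factor to adjust the derivative, and --- the essential device --- the convergence-forcing factors $e^{\gamma_j(z-\alpha_j)}$ with $\gamma_j=\lambda_j\bar\alpha_j/|\alpha_j|$. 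Your estimates check out: $\mathrm{Re}\,\gamma_j(z-\alpha_j)\le-\lambda_j(|\alpha_j|-|z|)$ since $\mathrm{Re}(\bar\alpha_j z)\le|\alpha_j||z|$; on $|z|\le|\alpha_j|/2$ one has $|z-\alpha_j|\ge|\alpha_j|/2$, so $h_j$ is bounded there by a constant independent of $\lambda_j$, while $|\mu_j|=|k_j-(\beta_j+\gamma_j)p_j|$ grows only linearly in $\lambda_j$; hence taking $\lambda_j$ large forces $\sup_{|z|\le|\alpha_j|/2}|E_j|\le 2^{-j}$, and since $|\alpha_j|\to\infty$ the series converges locally uniformly, with termwise differentiation justified by Weierstrass's convergence theorem. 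What each approach buys: yours is elementary, self-contained (only infinite products and normal families of series), and explicit; indeed it uses the hypothesis $\sum_j 1/|\alpha_j|<\infty$ only to form the genus-zero product, and with Weierstrass elementary factors the same scheme handles any discrete sequence, so your method actually proves a stronger statement. The paper's proof is shorter once Theorem B is granted and generalizes immediately to higher-order jets and to interpolation on arbitrary Stein manifolds, but it is non-constructive. One point of rigor common to both write-ups: Lemma \ref{lemma} as stated concerns numerical products of positive reals, so strictly one should invoke the locally uniform version of the convergence criterion for $\prod_j(1-z/\alpha_j)$ (as in Rudin, Ch.~15), which your remark about uniform convergence of $\sum_j|z/\alpha_j|$ on compact sets correctly anticipates.
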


\begin{proof}By making use of Lemma \ref{lemma}, the condition $\sum_{j=1}^{\infty}\frac{1}{|\alpha_j|}<\infty$ implies  the series $\prod_{j=1}^{\infty}\left(1-\frac{z}{\alpha_j}\right)^2$ converges to a non-constant entire function $h$ whose only zeros are the $\alpha_j$'s at which $h$ has zero derivative.

Denote by $\mathcal{O}$ the sheaf of holomorphic functions on $\C$ and by $\mathcal{I}$ the sheaf of homolomorphic functions on $\C$ which obtain $\alpha_j$ as zeros of order bigger than 1. Then we have $\mathcal{I} = h\cdot \mathcal{O},$ and therefore $\mathcal{I}$ is a coherent sheaf (cf. page 130, Proposition 8, chapter IV\cite{GunningRossi}).

Now we make use of a classical trick as in the Mittag-Leffler theorem. Locally around each point $\alpha_j,$ we always find a holomorphic function $f_j$ such that $f_j(\alpha_j) = p_j$ and $f_j'(\alpha_j) = k_j.$ It means we always find a cochain $\{(U_j,f_j)\},$  where $U_j$ is an open neighborhood of $\alpha_j$ for $j\geq 1,$ in the \v{C}ech cochain complex $C^0(\mathcal{U},\mathcal{O}),$ where $\mathcal{U} = \{U_j~:~j\geq 1\}$ is an open covering of $\C$ and $f_j(\alpha_j) = p_j,$ $f_j'(p_j) = k_j$ for $j\geq 1.$

Denote by $\delta$ the differential of the \v{C}ech complex. Then $\delta\{(U_j,f_j)\}$ is a cocycle in $Z^1(\mathcal{U},\mathcal{I})$ up to a refinement of $\mathcal{U}.$ But the sheaf $\mathcal{I}$ is coherent and  the underlying space $\C$ is Stein, therefore its cohomology $H^1(\C,\mathcal{I})$ vanishes.

It means that we can find a cochain $\{(U_j,g_j)\}\in C^0(\mathcal{U},\mathcal{I})$ such that $\delta\{(U_j,f_j)\}=\delta\{(U_j,g_j)\}.$ It implies that $f_j - g_j = f_i-g_i$ in $U_j\cap U_i$ for $j\neq i,$ then they define a global function $g\colon \C\to \C$ which satisfies the above interpolation conditions.
\end{proof}

\begin{proof}[Proof of Theorem \ref{main}] We prove only the case of $\C^{\ast}\times X,$ the proof of the remaining case is analogous.

 Since $f$ is non-constant and holomorphic, there exists a sequence of nonzero complex numbers $\{p_j\}_{j=1}^{\infty}$ in $\C$ such that $f'(p_j)\neq 0$ for every $j.$

Suppose $g\colon \C\to \C$ is a holomorphic function such that $g(\alpha_j) = p_j$ and $g'(\alpha_j)=k_j$ for $j\geq 1.$ This function always exists by Lemma \ref{mainlemma}. We compute the length of the tangent vector to the curve $\C\ni z\mapsto (e^z, (f\circ g)(e^z))$ at the point $z=q_j$ where $e^{q_j}=\alpha_j.$

We have \begin{align*}
E_{(\alpha_j, f(p_j))}(\alpha_j, f'(p_j)g'(\alpha_j)\alpha_j) &= E_{(\alpha_j, f(p_j))}(\alpha_j, f'(p_j)k_j\alpha_j)\\ & \geq |k_j|E_{(\alpha_j, f(p_j))}(0, f'(p_j)\alpha_j)-E_{(\alpha_j, f(p_j))}(\alpha_j,0).\end{align*}

So if we choose $$k_j = j\cdot\left(\frac{1}{E_{(\alpha_j, f(p_j))}(0, f'(p_j)\alpha_j)}+E_{(\alpha_j, f(p_j))}(\alpha_j,0)\right),$$ the entire curve $\C\ni z\mapsto (e^z, (f\circ g)(e^z)),$ which is from now on denoted by $F,$ is not an $E-$Brody curve.

Now consider the sequence of
$f_n\in\mathrm{Hol}(\Delta,\C^{\ast}\times X)$ defined by the
formula $f_n(z)= F(nz).$ This sequence contains no compactly
divergent subsequence since $f_n(0) = F(0)$ for every $n.$ So if
$\C^{\ast}\times X$ were of $E-$limit type, then we could extract a
subsequence, which is still denoted by $f_j,$ such that there exist
points $a_j\to a_0\in \Delta$ and $\rho_j>0$ which tends to $0$ with
following property: the sequence of mappings $$\C\ni \xi\mapsto
f_j(a_j + \rho_j\xi)$$ converges uniformly on compact subsets to a
non-constant entire curve denoted by $G.$

The first coordinate of  the mapping $\C\ni \xi\mapsto f_j(a_j + \rho_j\xi)$ is $e^{ja_j + j\rho_j\xi}$ which is a nonvanishing holomorphic function. Denote by $L$ the limit function of this sequence, then  $G = (L, (f\circ g)(L)).$ We deduce that $L$ is non-constant because of the non-constancy of $G.$ By Hurwitz's theorem, $L$ is also everywhere non-vanishing.

We have $$j\rho_j = \frac{\frac{d}{d\xi}e^{ja_j +
j\rho_j\xi}}{e^{ja_j + j\rho_j\xi}}$$ which, as a sequence of
functions, converges uniformly on compact subsets to a holomorphic
function which assumes only real values. Therefore $j\rho_j$ tends
to a nonzero number $B.$ We also have $e^{ja_j}$ converging to some
nonzero number $e^A.$

Thus $L$ is of the form $L(\xi) = \exp(A+B\xi)$ and it implies $$G(\xi) = (\exp(A+B\xi), (f\circ g)(\exp(A+B\xi))) = F(A+B\xi).$$

By the construction of $F,$ this gives a contradiction to the supposed $E-$limit type of $\C^{\ast}\times X.$
 \end{proof}

\vskip.5cm {\bf Acknowledgements.} This note is a by-product during
the author's stays at the Universit\'e Paul Sabatier thanks to the
financial aids of LIA Formath Vietnam and the Laboratoire Emile
Picard, Institut de Math\'ematiques de Toulouse. I would like to
thank Prof Do Duc Thai and Dr Ninh Van Thu for having pointed out an
error in my previous manuscript two years ago. I would also like to
thank Prof Pascal J. Thomas for having found these financial aids to
make possible my stays at the university. Finally, I would like to
thank the referee for his/her comments on the presentation of my
manuscript which leads to this present version.

\end{document}